\def\R{{\mathbb R}}
\def\C{{\mathbb C}}
\def\P{{\mathbb P}}
\def\F{{\mathcal F}}
\def\E{{\mathbb E}}
\newtheorem{thm}{Theorem}[section]
\newtheorem{cor}[thm]{Corollary}
\newtheorem{lem}[thm]{Lemma}
\theoremstyle{remark}
\newtheorem{rem}[thm]{Remark}
\theoremstyle{definition}
\newtheorem{dfn}[thm]{Definition}
\numberwithin{equation}{section}
\title{A variant of the Johnson-Lindenstrauss lemma for circulant matrices}
\author{Jan Vyb\'\i ral \\
Radon Institute for Computational and Applied Mathematics (RICAM)\\
Austrian Academy of Sciences\\
Altenbergerstra\ss e 69\\
A-4040 Linz, Austria\\
email:\ jan.vybiral@oeaw.ac.at}
\begin{document}
\maketitle

\begin{abstract}
We continue our study of the Johnson-Lindenstrauss lemma and its connection
to circulant matrices started in \cite{HV}.
We reduce the bound on $k$ from $k=O(\varepsilon^{-2}\log^3n)$ proven there
to $k=O(\varepsilon^{-2}\log^2n)$.
Our technique differs essentially from the one used in \cite{HV}. 
We employ the discrete Fourier transform and singular value decomposition
to deal with the dependency caused by the circulant structure.
\end{abstract}

{\bf AMS Classification: }{52C99, 68Q01}

{\bf Keywords and phrases: }{Johnson-Lindenstrauss lemma, circulant matrix, 
discrete Fourier transform, singular value decomposition}

\section{Introduction}

Let $x^1,\dots,x^n\in \R^d$ be $n$ points in the $d$-dimensional Euclidean space $\R^d$.
The classical Johnson-Lindenstrauss
lemma tells that, for a given $\varepsilon\in(0,\frac 12)$ and a natural number
$k=O(\varepsilon^{-2}\log n)$, there exists a linear map $f:\R^d\to\R^k$, such that
$$
 (1-\varepsilon)||x^j||_2^2\le ||f(x^j)||_2^2\le (1+\varepsilon)||x^j||_2^2
$$
for all $j\in\{1,\dots,n\}.$ 

Here $||\cdot||_2$ stands for the Euclidean norm in $\R^d$ or $\R^k$, respectively.
Furthermore, here and any time later, the condition $k=O(\varepsilon^{-2}\log n)$ means, that there is an absolute
constant $C>0$, such that the statement holds for all natural numbers $k$ with
$k\ge C\varepsilon^{-2}\log n$. We shall also always assume, that $k\le d$. Otherwise, 
the statement becomes trivial.

The original proof of this fact was given by Johnson and Lindenstrauss in \cite{JL}.
We refer to \cite{DG} for a beautiful and self-contained proof.
Since then, it has found many applications for example in algorithm design. 
These applications inspired
numerous variants  and improvements of the Johnson-Lindenstrauss lemma, 
which try to minimize the computational costs of $f(x)$,
the memory used, the number of random bits used and to
simplify the algorithm to allow an easy implementation.
We refer to \cite{IM,A,AC2,AC,M} for details and to \cite{M} for a nice description
of the history and the actual ``state of the art''.

All the known proofs of the Johnson-Lindenstrauss lemma work with random matrices
and proceed more or less in the following way. One considers a probability measure
$\P$ on a some subset ${\mathcal P}$ of all $k\times d$ matrices (i.e. all linear
mappings $\R^d\to\R^k$). The proof of the Johnson-Lindenstrauss lemma 
then emerges by some variant of the following two estimates
$$
\P\biggl(f\in{\mathcal P}:||f(x)||_2^2\ge 1+\varepsilon\biggr)<1-\frac {1}{2n}
$$
and
$$
\P\biggl(f\in{\mathcal P}:||f(x)||_2^2\le 1-\varepsilon\biggr)<1-\frac {1}{2n},
$$
which have to be proven for all unit vectors $x\in\R^d$,
and a simple union bound
over all points $x^j/||x^j||_2, j=1,\dots,n$. Here and later on
we assume, without loss of generality, that
$x^j\not =0$ for all $j=1,\dots,n.$

The best known construction of $f$ (according to the properties mentioned above)
was given by Ailon and Chazelle in \cite{AC2} with an improvement due to Matou\v{s}ek, cf. \cite{M}. 
It states, that $f$ may be given as a composition of a sparse matrix, certain
random Fourier matrix and a random diagonal matrix.
Although it provides a good computational time of $f(x)$ 
(with high probability $f(x)$ may be computed using 
$O(d\log d+\min\{d\varepsilon^{-2}\log n,\varepsilon^{-2}\log^3n\})$ operations),
it still needs, that each coordinate of the $k\times d$ matrix is generated
independently. In \cite{HV}, we studied a different construction of $f$,
namely the possibility of a composition of a random circulant matrix with
a random diagonal matrix. As a multiple of a circulant matrix may be implemented
with the help of a discrete Fourier transform, it provides the running time
of $O(d\log d)$, requires less randomness (only $2d$ compared to $kd$ or $(k+1)d$
used earlier) and allows a very simple implementation, as the Fast Fourier Transform
is a part of every standard mathematical software package.

The main difference between this approach and all the other constructions available in 
the literature so far is that the components of $f(x)$ are now no longer independent
random variables. Decoupling this dependence, we were able to prove in \cite{HV} the
Johnson-Lindenstrauss lemma for composition of a random circulant matrix
and a random diagonal matrix, but only for $k=O(\varepsilon^{-2}\log^3n)$.
It is the main aim of this note to improve this bound to $k=O(\varepsilon^{-2}\log^2n)$.
This comes essentially closer to the standard bound $k=O(\varepsilon^{-2}\log n)$.
Reaching this optimal bound (and keeping the control of the constants involved) 
remains an open problem and a subject of
a challenging research.

We use a completely different technique here. We use the discrete Fourier transform
and the singular value decomposition of circulant matrices. That is the reason,
why we found it more instructive to
state and prove our variant of Johnson-Lindenstrauss lemma for complex vectors
and Gaussian random variables.
As a corollary, we obtain of course a corresponding real version.

To state our main result, we first fix some notation. Let
\begin{itemize}
\item $\varepsilon\in (0,\frac 12)$,
\item $n\ge d$ be natural numbers,
\item $x^1,\dots,x^n\in\C^d$ be $n$ arbitrary points in $\C^d$,
\item $k=O(\varepsilon^{-2}\log^2 n)$ be a natural number smaller then $d$,
\item $a=(a_0,\dots,a_{d-1})$ be independent complex Gaussian variables, cf. Definition \ref{dfn1},
\item $\varkappa=(\varkappa_0,\dots,\varkappa_{d-1})$ be independent Bernoulli variables.
\end{itemize}
We denote by $M_{a,k}$ and $D_\varkappa$ the partial random circulant matrix
and the random diagonal matrix, respectively, cf. Definition \ref{dfn2} for details.
\begin{thm}\label{thm1}
The mapping $f:\C^d\to \C^k$ given by $f(x)=\frac{1}{\sqrt{2k}}M_{a,k}D_\varkappa x$ satisfies
$$
 (1-\varepsilon)||x^j||_2^2\le ||f(x^j)||_2^2\le (1+\varepsilon)||x^j||_2^2
$$
for all $j\in\{1,\dots,n\}$ with probability at least 2/3.
Here $||\cdot||_2$ stands for the $\ell_2$-norm in $\C^d$ or $\C^k$, respectively.
\end{thm}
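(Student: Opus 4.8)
By homogeneity it suffices to prove, for a single fixed unit vector $x\in\C^d$, that $\bigl|\,\|f(x)\|_2^2-1\,\bigr|>\varepsilon$ happens with probability at most $1/(3n)$, and then to take a union bound over the $n$ normalised points $x^1/\|x^1\|_2,\dots,x^n/\|x^n\|_2$. Fix such an $x$ and put $y=D_\varkappa x$, so $\|y\|_2=\|x\|_2=1$. The vector $M_{a,k}D_\varkappa x$ consists of the first $k$ entries of the circular convolution of $a$ and $y$, and circular convolution is commutative; hence
\[
f(x)=\frac{1}{\sqrt{2k}}\,M_{y,k}\,a=:A_\varkappa\,a,
\]
where $A_\varkappa$ is the $k\times d$ partial circulant matrix generated by $y$, scaled by $(2k)^{-1/2}$. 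The point of this rewriting is that $a$ is independent of $\varkappa$, so that \emph{conditionally on $\varkappa$} the matrix $A_\varkappa$ is deterministic while $a$ is still a vector of i.i.d.\ complex Gaussians; this is how the dependence coming from the circulant structure is decoupled.

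Next I would take a singular value decomposition $A_\varkappa=U\Sigma V^*$ with singular values $\sigma_1,\dots,\sigma_k\ge 0$. Since the law of $a$ is invariant under the unitary $V^*$, one gets, conditionally on $\varkappa$,
\[
\|f(x)\|_2^2=\|A_\varkappa a\|_2^2\ \overset{d}{=}\ \sum_{i=1}^{k}\sigma_i^2\,|h_i|^2,
\]
with $h_1,\dots,h_k$ i.i.d.\ standard complex Gaussians, so the $|h_i|^2$ are i.i.d.\ (scaled) $\chi^2_2$ variables, in particular sub-exponential. Each row of a circulant generated by $y$ has $\ell_2$-norm $\|y\|_2=1$, so $\sum_i\sigma_i^2=\|A_\varkappa\|_F^2=\tfrac12$, and with the normalisation fixed in Definition~\ref{dfn1} this gives $\E[\|f(x)\|_2^2\mid\varkappa]=1$. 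A Bernstein inequality for weighted sums of independent sub-exponential variables then yields
\[
\P\Bigl(\bigl|\,\|f(x)\|_2^2-1\,\bigr|>\varepsilon\ \Big|\ \varkappa\Bigr)\le 2\exp\Bigl(-c\,\min\Bigl\{\tfrac{\varepsilon^2}{\sum_i\sigma_i^4},\ \tfrac{\varepsilon}{\max_i\sigma_i^2}\Bigr\}\Bigr),
\]
and since $\sum_i\sigma_i^4\le(\max_i\sigma_i^2)\sum_i\sigma_i^2=\tfrac12\max_i\sigma_i^2$, everything reduces to controlling $\max_i\sigma_i^2$ on a high-probability event for $\varkappa$.

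For that, note $A_\varkappa$ is a row restriction of $(2k)^{-1/2}$ times the full circulant $C_y$, so $\max_i\sigma_i^2\le\tfrac{1}{2k}\|C_y\|_{2\to2}^2$, and because a circulant is diagonalised by the discrete Fourier transform, $\|C_y\|_{2\to2}^2=d\max_{0\le l<d}|\hat y_l|^2$. Now $\hat y_l=\tfrac{1}{\sqrt d}\sum_m\omega^{-ml}\varkappa_m x_m$ is, in the randomness $\varkappa$, a Rademacher sum whose coefficients have energy $\|x\|_2^2/d=1/d$; applying Hoeffding's inequality to its real and imaginary parts and then a union bound over the $d\le n$ frequencies shows that with probability at least $1-n^{-9}$ one has $\max_l|\hat y_l|^2\le C\log n/d$, hence $\max_i\sigma_i^2\le C\log n/(2k)$ on that event. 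Substituting into the Bernstein bound, the exponent is at least $c\min\{\varepsilon^2 k/(C\log n),\,\varepsilon k/(C\log n)\}\ge c\,\varepsilon^2 k/(C\log n)$ because $\varepsilon<\tfrac12$, which exceeds $\log(6n)$ as soon as $k\ge C'\varepsilon^{-2}\log^2 n$; the conditional failure probability is then $\le1/(3n)$. Running the Fourier estimate simultaneously for all $n$ normalised data points costs only a union bound of total probability $\le n\cdot n^{-9}$ for the corresponding ``good $\varkappa$'' event $G$; since $a\perp\varkappa$, on $G$ the conditional bound holds for each point, and a final union bound over the $n$ points gives total failure probability at most $\P(G^c)+n\cdot\tfrac{1}{3n}\le\tfrac13+n^{-8}$, which is $\le\tfrac13$ after a harmless enlargement of the constant in $k\ge C\varepsilon^{-2}\log^2 n$. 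This proves the theorem.

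The main obstacle is the spectral step: one would like $\max_i\sigma_i^2\approx1/k$, which would reproduce the optimal bound $k=O(\varepsilon^{-2}\log n)$, but the Fourier coefficients of a random-sign modulation of a general unit vector $x$ can only be controlled down to $O(\sqrt{\log n/d})$, which costs one logarithmic factor; the second logarithm is the usual price of the union bound over the $n$ points. Removing either logarithm appears to require a genuinely different argument and is left open.
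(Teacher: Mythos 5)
Your proposal is correct and takes essentially the same route as the paper: you commute the convolution so that $M_{a,k}D_\varkappa x$ becomes a fixed partial circulant generated by $D_\varkappa x$ acting on the Gaussian vector $a$, diagonalise via the SVD and unitary invariance, compute the Frobenius norm exactly, bound the operator norm by the flatness of $\F_d D_\varkappa x$ (the paper's Lemma \ref{lem1}), and close with a Bernstein-type bound for weighted sums of $\chi^2$ variables (the paper cites Laurent--Massart for the same inequality). The only differences are cosmetic choices of normalisation and union-bound bookkeeping.
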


For reader's convenience, we formulate also a variant of Theorem \ref{thm1}, which deals with
real Euclidean spaces.

\begin{cor}\label{cor1}
Let $\varepsilon\in (0,\frac 12)$, $n\ge d$ be natural numbers,
and let $x^1,\dots,x^{n}\in\R^{2d}$ be $n$ arbitrary points in $\R^{2d}$.
Let $\alpha_0,\dots,\alpha_{d-1},\beta_0,\dots,\beta_{d-1}$ be $2d$ independent real Gaussian variables
and let $\varkappa=(\varkappa_0,\dots,\varkappa_{d-1})$ be independent Bernoulli variables.

If $k=O(\varepsilon^{-2}\log^2 n)$ is a natural number,
then the mapping $f:\R^{2d}\to \R^{2k}$ given by 
$$
f(x)=\frac{1}{\sqrt{2k}}
\left(\begin{matrix}M_{\alpha,k}&-M_{\beta,k}\\
M_{\beta,k}&M_{\alpha,k}\end{matrix}\right)
\left(\begin{matrix}D_{\varkappa}&0\\
0&D_{\varkappa}\end{matrix}\right)x
$$ 
satisfies
$$
 (1-\varepsilon)||x^j||_2^2\le ||f(x^j)||_2^2\le (1+\varepsilon)||x^j||_2^2
$$
for all $j\in\{1,\dots,n\}$ with probability at least 2/3.
Here $||\cdot||_2$ stands for the $\ell_2$-norm in $\R^{2d}$ or $\R^{2k}$, respectively.
\end{cor}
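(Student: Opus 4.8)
The plan is to deduce Corollary~\ref{cor1} directly from Theorem~\ref{thm1} via the standard identification of $\R^{2d}$ with $\C^d$. First I would introduce the $\R$-linear map $\iota:\R^{2d}\to\C^d$ sending $x=(u,v)$ with $u,v\in\R^d$ to $u+iv\in\C^d$; since $\|u+iv\|_2^2=\|u\|_2^2+\|v\|_2^2$, the map $\iota$ preserves the Euclidean norm, and the same holds for the analogous map $\R^{2k}\to\C^k$. Putting $y^j=\iota(x^j)\in\C^d$ we then have $\|y^j\|_2=\|x^j\|_2$ for all $j$.

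Next I would verify that, under these identifications, the real block map in the statement is exactly the complex map of Theorem~\ref{thm1}. Writing $a=(a_0,\dots,a_{d-1})$ with $a_\ell=\alpha_\ell+i\beta_\ell$, linearity of the circulant construction in the generating vector gives $M_{a,k}=M_{\alpha,k}+iM_{\beta,k}$. For real matrices $B,C$, multiplication by $B+iC$ sends $u+iv$ to $(Bu-Cv)+i(Cu+Bv)$, i.e. in real coordinates it is $\left(\begin{smallmatrix}B&-C\\ C&B\end{smallmatrix}\right)$; with $B=M_{\alpha,k}$, $C=M_{\beta,k}$ this is precisely the first block matrix of the corollary. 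Since $D_\varkappa$ is real, it acts componentwise on $\C^d$ and corresponds to $\left(\begin{smallmatrix}D_\varkappa&0\\ 0&D_\varkappa\end{smallmatrix}\right)$. Hence the real map $f$ of the corollary satisfies $\iota\circ f=\tilde f\circ\iota$, where $\tilde f(y)=\frac{1}{\sqrt{2k}}M_{a,k}D_\varkappa y$ is the map of Theorem~\ref{thm1}; note that the normalization $\frac{1}{\sqrt{2k}}$ is the same in both.

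It then remains to match the probabilistic ingredients: by Definition~\ref{dfn1} the complex Gaussian vector $a$ has exactly the law of $(\alpha_0+i\beta_0,\dots,\alpha_{d-1}+i\beta_{d-1})$ with $\alpha_\ell,\beta_\ell$ independent real Gaussians, which is the family used in the corollary, and $\varkappa$ is the same Bernoulli vector in both places. Thus $f$ and $\tilde f\circ\iota$ have the same distribution, and on a common probability space $\|f(x^j)\|_2=\|\tilde f(y^j)\|_2$ for all $j$. Applying Theorem~\ref{thm1} to $y^1,\dots,y^n\in\C^d$ and the map $\tilde f$ yields, with probability at least $2/3$,
\[
 (1-\varepsilon)\|y^j\|_2^2\le\|\tilde f(y^j)\|_2^2\le(1+\varepsilon)\|y^j\|_2^2\qquad\text{for all }j,
\]
and substituting $\|y^j\|_2=\|x^j\|_2$ and $\|\tilde f(y^j)\|_2=\|f(x^j)\|_2$ finishes the proof. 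The \emph{only} point I would write out with care is the identity $\iota\circ f=\tilde f\circ\iota$ together with the distributional identification of $a$ with $\alpha+i\beta$; everything else is a norm-preserving change of variables that costs nothing in the constants, so I do not expect a genuine obstacle here.
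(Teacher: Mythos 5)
Your proposal is correct and is exactly the argument the paper intends: the paper's one-line proof says the corollary "follows trivially from Theorem \ref{thm1} by considering complex Gaussian variables $a=(\alpha_0+i\beta_0,\dots,\alpha_{d-1}+i\beta_{d-1})$ and complex vectors $y^j=(x^j_0+ix^j_d,\dots,x^j_{d-1}+ix^j_{2d-1})$", which is precisely your identification of $\R^{2d}$ with $\C^d$. You have merely written out the routine verifications (norm preservation, the block-matrix form of multiplication by $M_{\alpha,k}+iM_{\beta,k}$, and the distributional match with Definition \ref{dfn1}) that the paper leaves implicit.
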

The proof follows trivially from Theorem \ref{thm1} by considering complex Gaussian variables
$a=(\alpha_0+i\beta_0,\dots,\alpha_{d-1}+i\beta_{d-1})$ and complex vectors 
$y^j=(x^j_0+ix^j_{d},\dots,x^j_{d-1}+ix^j_{2d-1})\in \C^d$, $j=1,\dots,n$.

\section{Used techniques}
Let us give a brief overview of techniques used in the proof of Theorem \ref{thm1}.
We shall list only those few properties needed in the sequel.

\subsection{Discrete Fourier transform}
Our main tool in this note is the discrete Fourier transform. If $d$ is a natural number, then
the discrete Fourier transform $\F_d:\C^d\to\C^d$ is defined by
$$
(\F_d x)(\xi)=\frac{1}{\sqrt{d}}\sum_{u=0}^{d-1}x_u \exp\Bigl(-\frac{2\pi iu\xi}{d}\Bigr).
$$
With this normalisation, $\F_d$ is an isomorphism of $\C^d$ onto itself. The inverse discrete
Fourier transform is given by
$$
(\F^{-1}_d x)(\xi)=\frac{1}{\sqrt{d}}\sum_{u=0}^{d-1}x_u \exp\Bigl(\frac{2\pi i u\xi}{d}\Bigr).
$$
Observe, that the matrix representation of $\F^{-1}_d$ is the conjugate transpose
of the matrix representation of $\F_d$, i.e. $\F^{-1}_d=\F_d^*$.

\subsection{Circulant matrices}
\begin{dfn}\label{dfn1}
Let $\alpha$ and $\beta$ be independent real Gaussian random variables with
$$
\E\alpha=\E\beta=0 \quad\text{and}\quad \E|\alpha|^2=\E|\beta|^2=1.
$$ 
Then we call
$$
a=\alpha+i\beta
$$
\emph{complex Gaussian variable.}
\end{dfn}
Let us note, that if $a$ is a complex Gaussian variable, then
$$
\E a=\E \alpha + i \E \beta = 0 \quad\text{and}\quad 
\E |a|^2 = \E \alpha^2 + \E \beta^2 = 2.
$$
\begin{dfn}\label{dfn2}
(i) Let $k\le d$ be natural numbers. Let $a=(a_0,\dots,a_{d-1})\in \C^d$ be a fixed complex vector.
We denote by $M_{a,k}$ the partial circulant matrix
$$M_{a,k}=\left(
\begin{matrix}
a_0 & a_1 & a_2 & \dots & a_{d-1}\\
a_{d-1} & a_0 & a_1 & \dots & a_{d-2}\\
a_{d-2} & a_{d-1} & a_{0} &\dots & a_{d-3}\\
\vdots & \vdots & \vdots &\ddots & \vdots\\
a_{d-k+1} & a_{d-k+2} & a_{d-k+3} & \dots & a_{d-k}
\end{matrix}
\right)\in \C^{k\times d}.
$$
If $k=d$, we denote by $M_{a}=M_{a,d}$ the full circulant matrix.
This notation extends naturally to the case, when $a=(a_0,\dots,a_{d-1})$ are
independent complex Gaussian variables.

(ii) If $\varkappa=(\varkappa_0,\dots,\varkappa_{d-1})$ are independent Bernoulli
variables, we put
$$
D_{\varkappa}={\rm diag}(\varkappa):=\left(\begin{matrix}
\varkappa_0 & 0  & \dots & 0\\
0 & \varkappa_1  & \dots & 0\\
\vdots & \vdots  &\ddots & \vdots\\
0 & 0 & \dots & \varkappa_{d-1}
\end{matrix}\right)\in\R^{d\times d}.
$$
\end{dfn}
Of course, $D_\varkappa:\C^d\to\C^d$ is also an isomorphism.

The fundamental connection between discrete Fourier transform and circulant matrices
is given by
\begin{equation}\label{eq:1}
M_a=\F_d\, {\rm diag}(\sqrt d\F_d a) \F^{-1}_d,
\end{equation}
which may be verified by direct calculation.
Hence every circulant matrix may be diagonalised with the use of a discrete Fourier
transform, its inverse and a multiple of the discrete Fourier transform of its first row.

\subsection{Singular value decomposition}
The last tool needed in the proof is the singular value decomposition. Let $M:\C^d\to \C^k$
be a $k\times d$ complex matrix with $k\le d$. Then there exists a decomposition
$$
M=U\Sigma V^*,
$$
where $U$ is a $k\times k$ unitary complex matrix,
$\Sigma$ is a $k\times k$ diagonal matrix with nonnegative entries on the diagonal,
$V$ is a $d\times k$ complex matrix with $k$ orthonormal columns
and $V^*$ denotes the conjugate transpose of $V$. Hence $V^*$ has $k$ orthonormal rows.
The entries of $\Sigma$ are the singular values of $M$, namely the square roots of the eigenvalues of $MM^*$.

If $a=(a_0,\dots,a_{d-1})\in \C^d$ is a complex vector and $M_a$ is the corresponding circulant matrix,
then its singular values may be calculated using \eqref{eq:1}. We obtain
\begin{align*}
M_aM_a^*&=\F_d {\rm diag}(\sqrt d\F_d a) \F^{-1}_d [\F_d {\rm diag}(\sqrt d\F_d a) \F_d^{-1}]^*
=\F_d {\rm diag}(\sqrt d\F_d a) {\rm diag}(\overline{\sqrt d\F_d a}) \F_d^{-1}\\
&=\F_d {\rm diag}(d|\F_d a|^2) \F_d^{-1}.
\end{align*}
Hence, the singular values of $M_a$ are $\{\sqrt d|(\F_d a)(\xi)|\}_{\xi=0}^{d-1}.$

The action of an arbitrary projection onto a vector of independent real Gaussian variables 
is very well known. It may be described as follows.

\begin{lem}\label{lem3}
Let $a=(a_0,\dots,a_{d-1})$ be independent real Gaussian variables.
Let $k\le d$ be a natural number and let $x^1,\dots,x^k$ be mutually orthogonal unit vectors
in $\R^d$. Then
$$
\{\langle a,x^j\rangle\}_{j=1}^k
$$
is equidistributed with a $k$-dimensional vector of independent real Gaussian variables.
\end{lem}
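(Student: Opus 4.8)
The plan is to invoke the rotational invariance of the standard Gaussian measure on $\R^d$. First I would recall that the random vector $a=(a_0,\dots,a_{d-1})$ has joint density proportional to $\exp(-||a||_2^2/2)$, a function depending on $a$ only through its Euclidean norm. Hence for any orthogonal matrix $O\in\R^{d\times d}$ the vector $Oa$ has the same distribution as $a$: the change of variables $a\mapsto Oa$ has Jacobian determinant $\pm 1$ and leaves the density unchanged, since $||Oa||_2=||a||_2$.

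Next I would complete the given orthonormal system $x^1,\dots,x^k$ to an orthonormal basis $x^1,\dots,x^d$ of $\R^d$ (possible since $k\le d$, e.g.\ by Gram--Schmidt), and let $O$ be the matrix whose $j$-th row is $(x^j)^T$. Then $O$ is orthogonal, and the $j$-th coordinate of $Oa$ equals exactly $\langle a,x^j\rangle$. By the previous step, $(\langle a,x^1\rangle,\dots,\langle a,x^d\rangle)=Oa$ is again a vector of $d$ independent real Gaussian variables; restricting to the first $k$ coordinates gives the claim.

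An alternative route, which avoids extending the basis, is to argue via the covariance: each $\langle a,x^j\rangle=\sum_u a_u x^j_u$ is a finite linear combination of independent Gaussians, hence Gaussian with mean $0$, and $\E\big[\langle a,x^i\rangle\langle a,x^j\rangle\big]=\sum_u x^i_u x^j_u=\langle x^i,x^j\rangle=\delta_{ij}$. Since a jointly Gaussian vector is determined by its mean and covariance matrix, and here the covariance is the $k\times k$ identity, the coordinates $\langle a,x^j\rangle$ are independent standard real Gaussians.

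There is no real obstacle here, as the statement is classical; the only point needing (standard) care is that a mean-zero Gaussian vector with identity covariance has independent standard Gaussian components, which follows from the factorization of its density or characteristic function. In the final text I would present whichever of the two arguments above is more convenient, in two or three lines.
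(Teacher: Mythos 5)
Your proof is correct. The paper itself offers no proof of Lemma \ref{lem3} --- it is stated as a well-known fact about projections of Gaussian vectors --- and both of your arguments (rotational invariance of the standard Gaussian density after completing $x^1,\dots,x^k$ to an orthonormal basis, and the covariance computation for a jointly Gaussian vector) are standard, complete, and exactly the kind of justification the paper implicitly relies on.
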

A direct calculation shows, that Lemma \ref{lem3} holds also for complex vectors $a$ and
$x^1,\dots,x^k$. We present the following formulation of this fact.
\begin{lem}\label{lem4}
Let $a=(a_0,\dots,a_{d-1})$ be independent complex Gaussian variables. Let $W$ be a $k\times d$
matrix with $k$ orthonormal rows. Then $Wa$ is equidistributed
with a $k$-dimensional vector of independent complex Gaussian variables.
\end{lem}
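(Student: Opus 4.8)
The plan is to reduce the statement to two elementary facts: first, that a vector of independent complex Gaussian variables in the sense of Definition~\ref{dfn1} becomes, after the obvious identification $\C^d\cong\R^{2d}$, a standard Gaussian vector in $\R^{2d}$ (each coordinate mean zero, variance one, all independent); and second, that any linear image of a centered real Gaussian vector is again a centered Gaussian vector, whose law is determined entirely by its covariance matrix. Granting these, it suffices to compute the covariance of the $2k$ real random variables $\mathrm{Re}(Wa)_1,\dots,\mathrm{Re}(Wa)_k,\mathrm{Im}(Wa)_1,\dots,\mathrm{Im}(Wa)_k$ and to check that it equals the identity matrix $I_{2k}$; that is exactly the assertion that $Wa$ is a vector of $k$ independent complex Gaussian variables.

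First I would realify the expression $Wa$. Writing $W=P+iQ$ with $P,Q$ real $k\times d$ matrices and $a=\alpha+i\beta$ with $\alpha=(\alpha_0,\dots,\alpha_{d-1})$, $\beta=(\beta_0,\dots,\beta_{d-1})$ independent standard real Gaussian vectors, one has $\mathrm{Re}(Wa)=P\alpha-Q\beta$ and $\mathrm{Im}(Wa)=Q\alpha+P\beta$, so that the stacked vector $(\mathrm{Re}(Wa),\mathrm{Im}(Wa))$ equals $\widetilde W(\alpha,\beta)$ with $\widetilde W=\left(\begin{smallmatrix}P&-Q\\ Q&P\end{smallmatrix}\right)$ a real $2k\times 2d$ matrix and $(\alpha,\beta)$ a standard Gaussian vector in $\R^{2d}$. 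Its covariance is therefore $\widetilde W\widetilde W^{T}=\left(\begin{smallmatrix}PP^{T}+QQ^{T}& PQ^{T}-QP^{T}\\ QP^{T}-PQ^{T}& PP^{T}+QQ^{T}\end{smallmatrix}\right)$. At this point I would invoke the hypothesis that $W$ has orthonormal rows, i.e. $WW^{*}=I_k$; expanding $WW^{*}=(P+iQ)(P^{T}-iQ^{T})=(PP^{T}+QQ^{T})+i(QP^{T}-PQ^{T})$ and separating real and imaginary parts yields $PP^{T}+QQ^{T}=I_k$ and $QP^{T}-PQ^{T}=0$. Substituting these into the block matrix gives $\widetilde W\widetilde W^{T}=I_{2k}$, completing the argument.

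I do not expect a genuine obstacle here; the one point that must be handled with care is that the single complex relation $WW^{*}=I_k$ has to be split into its symmetric and antisymmetric real parts — the symmetric part furnishing the unit variances of all the real and all the imaginary coordinates of $Wa$, and the antisymmetric part furnishing the vanishing of every remaining covariance, in particular the cross-covariances between real and imaginary parts. As a cross-check, the same computation with $Q=0$ recovers Lemma~\ref{lem3}. If one prefers a coordinate-free route, one can instead observe that the law of $a$ is invariant under the unitary group: for $U^{*}U=I_d$ the realification of $U$ is an orthogonal map of $\R^{2d}$, under which the standard Gaussian is invariant, so $Ua$ is equidistributed with $a$; completing the $k$ rows of $W$ by Gram--Schmidt to a unitary $d\times d$ matrix $\widetilde W$ and noting that $\widetilde W a$ is equidistributed with $a$, the first $k$ coordinates then give the claim at once.
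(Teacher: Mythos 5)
Your proof is correct; the paper itself offers no written proof of Lemma~\ref{lem4}, merely asserting that ``a direct calculation shows'' the complex analogue of Lemma~\ref{lem3}, and your realification of $W$ and $a$ together with the covariance computation $\widetilde W\widetilde W^{T}=I_{2k}$ is exactly that calculation carried out in full, correctly using both the symmetric part $PP^{T}+QQ^{T}=I_k$ and the antisymmetric part $QP^{T}-PQ^{T}=0$ of $WW^{*}=I_k$. Your alternative argument via extending $W$ to a $d\times d$ unitary and invoking the orthogonal invariance of the standard Gaussian on $\R^{2d}$ is equally valid and arguably cleaner.
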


\section{Proof of Theorem \ref{thm1}}

We shall need the following statement, which describes the preconditioning role of the diagonal matrix $D_\varkappa$.
A similar fact has been used also in \cite{AC2}. Nevertheless, using discrete Fourier transform
instead of a Hadamard matrix does not pose any restrictions on the underlying dimension $d$.
Without repeating the details, we point out, that we discussed briefly in \cite[Remark 2.5]{HV},
why this preconditioning may not be omitted.
\begin{lem}\label{lem1}
Let $n\ge d$ be natural numbers and let $x^1,\dots,x^n\in\C^d$ be complex vectors. 
Let $\varkappa=(\varkappa_0,\dots,\varkappa_{d-1})$ be independent Bernoulli variables.
Then there is an absolute constant $C>0$, such that with probability at least $5/6$
\begin{equation}\label{eq:3.1}
||\F_d\, D_{\varkappa}(x^j)||_\infty \le \frac{C\,\sqrt {\log n}}{\sqrt d} \cdot ||x^j||_2
\end{equation}
holds for all $j\in\{1,\dots,n\}.$
\end{lem}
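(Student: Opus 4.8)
The plan is to fix a single unit vector $x = x^j$ and bound the probability that $||\F_d D_\varkappa x||_\infty$ exceeds $C\sqrt{\log n}/\sqrt d$, then take a union bound over the $n$ points. Writing out a single coordinate, $(\F_d D_\varkappa x)(\xi) = \frac{1}{\sqrt d}\sum_{u=0}^{d-1}\varkappa_u x_u \exp(-2\pi i u\xi/d)$, we see that for each fixed $\xi$ this is a Rademacher sum $\frac{1}{\sqrt d}\sum_u \varkappa_u c_u$ with complex coefficients $c_u = x_u \exp(-2\pi i u\xi/d)$ satisfying $\sum_u |c_u|^2 = ||x||_2^2 = 1$. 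So the first step is a concentration estimate for such sums. Since the $\varkappa_u$ are bounded, a standard Hoeffding-type (subgaussian) bound applies; splitting into real and imaginary parts, each is a real Rademacher sum with $\ell_2$-norm of coefficients at most $1$, so $\P(|\mathrm{Re}\,(\sqrt d\,(\F_d D_\varkappa x)(\xi))| \ge t) \le 2\exp(-t^2/2)$ and similarly for the imaginary part.

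Next I would combine these into a tail bound for the modulus: $\P(|(\F_d D_\varkappa x)(\xi)| \ge t/\sqrt d) \le 4\exp(-t^2/4)$ (or any such clean constant). Then a union bound over the $d$ frequencies $\xi \in \{0,\dots,d-1\}$ gives $\P(||\F_d D_\varkappa x||_\infty \ge t/\sqrt d) \le 4d\exp(-t^2/4)$. Finally, a union bound over the $n$ vectors $x^1,\dots,x^n$ (after normalising, then rescaling by $||x^j||_2$) yields $\P(\exists j:\ ||\F_d D_\varkappa(x^j)||_\infty > \frac{t}{\sqrt d}||x^j||_2) \le 4dn\exp(-t^2/4)$. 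Choosing $t = C\sqrt{\log n}$ with $C$ large enough makes this at most $1/6$; here one uses $d \le n$, so $4dn \le 4n^2$ and $4n^2 \exp(-C^2 \log n/4) = 4 n^{2 - C^2/4} \le 1/6$ for $C$ a suitable absolute constant. This proves \eqref{eq:3.1} with probability at least $5/6$.

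The only mild subtlety — and the step I would be most careful about — is the passage from the two real tail bounds to the bound on the complex modulus with clean absolute constants, and making sure the final counting $4dn\,\exp(-C^2\log n/4)\le 1/6$ genuinely only needs $d\le n$ and an absolute $C$; both are routine but deserve an honest line. Everything else is a direct application of the subgaussian inequality for Rademacher sums together with two nested union bounds.
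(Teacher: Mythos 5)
Your proposal is correct and follows essentially the same route as the paper: a coordinate-wise subgaussian (Hoeffding) tail bound for the real and imaginary parts of each Fourier coefficient, combined into a bound on the modulus, followed by a union bound over the $nd\le n^2$ entries. The only cosmetic difference is that the paper derives the Hoeffding bound explicitly via the exponential moment method (using $\cosh(v)\le\exp(v^2/2)$ and the pointwise estimate $[\alpha_u\cos(2\pi lu/d)+\beta_u\sin(2\pi lu/d)]^2\le\alpha_u^2+\beta_u^2$), whereas you invoke it as a known inequality.
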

\begin{proof}
Let $x=\alpha+i\beta$ be a unit complex vector in $\C^d$. 
We put $y=(y_0,\dots,y_{d-1})=\F_d \, D_{\varkappa}(x)$. Then we may estimate
\begin{equation}\label{eq:3.11}
\P_\varkappa(|y_l|>s)\le 2\P_\varkappa(\Re y_l>\frac{s}{\sqrt 2})+2\P_\varkappa(\Im y_l>\frac{s}{\sqrt 2}),
\quad l=0,\dots, d-1,
\end{equation}
where
$$
\Re y_l=\frac {1}{\sqrt d}\sum_{u=0}^{d-1}\varkappa_u[\alpha_u\cos(2\pi lu/d)+\beta_u\sin(2\pi lu/d)]
$$
and
$$
\Im y_l=\frac {1}{\sqrt d}\sum_{u=0}^{d-1}\varkappa_u[\beta_u\cos(2\pi lu/d)-\alpha_u\sin(2\pi lu/d)]
$$
are the real and the imaginary part of $y_l$, respectively.

Using the Markov's inequality and a real parameter $t>0$, which is at our disposal, we may
proceed in a standard way:
\begin{align*}
\P_\varkappa\Bigl(\Re y_l>\frac{s}{\sqrt 2}\Bigr)&=
\P_\varkappa\Bigl(\exp(t\Re y_l-\frac{st}{\sqrt 2})>1\Bigr)\\
&\le \exp\Bigl(-\frac{st}{\sqrt 2}\Bigr)\E_\varkappa\exp(t\Re y_l)\\
&= \exp\Bigl(-\frac{st}{\sqrt 2}\Bigr) \prod_{u=0}^{d-1}\cosh\Bigl[\frac{t}{\sqrt d}[\alpha_u\cos(2\pi lu/d)+\beta_u\sin(2\pi lu/d)]\Bigr]\\
&\le \exp\Bigl(-\frac{st}{\sqrt 2}\Bigr) \prod_{u=0}^{d-1} \exp\Bigl(\frac{t^2}{2d}[\alpha_u\cos(2\pi lu/d)+\beta_u\sin(2\pi lu/d)]^2\Bigr)\\
&\le \exp\Bigl(-\frac{st}{\sqrt 2}\Bigr) \prod_{u=0}^{d-1} \exp\Bigl(\frac{t^2}{2d}[\alpha_u^2+\beta_u^2]\Bigr)=
\exp\Bigl(-\frac{st}{\sqrt 2}+\frac{t^2}{2d}\Bigr).
\end{align*}
We have used the inequality $\cosh(v)\le \exp(v^2/2)$, which holds for all $v\in\R$,
and the inequality between geometric and quadratic means.
For the optimal $t=\frac{sd}{\sqrt 2}$, this is equal to $\exp(-\frac{s^2d}{4})$.

As the second summand in \eqref{eq:3.11} may be estimated in the same way, we obtain
\begin{equation}\label{eq:3.12}
\P_\varkappa(|y_l|>s)\le 4 \exp\Bigl(-\frac{s^2d}{4}\Bigr),\quad l=0,\dots, d-1.
\end{equation}
Choosing $s=O(d^{-1/2}\sqrt{\log n})$ and applying the union bound over all 
$nd\le n^2$ components of $\{\F_d\, D_{\varkappa}(x^j/||x^j||_2)\}_{j=1}^n$,
we obtain the result.
\end{proof}

{\it Proof of Theorem \ref{thm1}}

Let us choose a vector $\varkappa=(\varkappa_0,\dots,\varkappa_{d-1})\in\{-1,+1\}^d$, 
such that \eqref{eq:3.1} holds.
According to the Lemma \ref{lem1} this happens with probability at least $5/6$.

Let us take $\tilde x=\frac{x^j}{||x^j||_2}$ for any fixed $j=1,\dots,n.$ We show, that
there is an absolute constant $c>0$, such that
\begin{equation}\label{eq:3.2}
\P_a\bigl(||M_{a,k}D_\varkappa \tilde x||_2^2\ge 2(1+\varepsilon) k\bigr)\le 
\exp\Bigl(-\frac{ck\varepsilon^{2}}{\log n}\Bigr)
\end{equation}
and
\begin{equation}\label{eq:3.3}
\P_a\bigl(||M_{a,k}D_\varkappa \tilde x||_2^2\le 2(1-\varepsilon) k\bigr)\le 
\exp\Bigl(-\frac{ck\varepsilon^{2}}{\log n}\Bigr)
\end{equation}
holds. From \eqref{eq:3.2} and \eqref{eq:3.3}, Theorem \ref{thm1} follows again by a union 
bound over all $j= 1,\dots, n.$

Let $y^j=S^j(D_{\varkappa} \tilde x)\in\C^d$, $j=0,\dots,k-1$, where $S$ is the shift operator defined by
$$
S:\C^d\to \C^d, \quad S(z_0,\dots,z_{d-1})=(z_1,\dots,z_{d-1},z_0).
$$
We denote by $Y$ the $k\times d$ matrix with rows $y^0,\dots,y^{k-1}$.

Then it holds
\begin{equation*}
||M_{a,k}D_\varkappa \tilde x||_2^2=
\sum_{j=0}^{k-1}\bigl|\sum_{u=0}^{d-1}a_{(u-j)\,{\rm mod}\, d\,}\varkappa_u\tilde x_u\bigr|^2
=\sum_{j=0}^{k-1}\bigl| \sum_{u=0}^{d-1} y^j_u a_u\bigr|^2
=||Ya||_2^2.
\end{equation*}

Let $Y=U\Sigma V^*$ be the singular value decomposition of $Y$. As mentioned above, $b:=V^*a$
is a $k$-dimensional vector of independent complex Gaussian variables. Hence,
$$
||Y a||_2^2=||U\Sigma V^* a||_2^2=||U\Sigma b||_2^2=||\Sigma b||_2^2=\sum_{j=0}^{k-1}\lambda^2_j |b_j|^2,
$$
where $\lambda_j, j=0,\dots, k-1$, are the singular values of $Y$. Let us denote $\mu_j=\lambda_j^2$.
Then 
$$
||\mu||_1=\sum_{j=0}^{k-1}\lambda_j^2=||Y||^2_{F}=k,
$$
where $||Y||_F$ is the Frobenius norm of $Y$.

Moreover, 
\begin{align}\label{eq:3.4}
||\mu||_\infty&=||\lambda||^2_\infty
=\sup_{z\in\C^d, ||z||_2\le 1}||Yz||_2^2\\
&\notag\le \sup_{z\in\C^d, ||z||_2\le 1}||M_{D_\varkappa \tilde x}z||_2^2
= d||\F_d D_\varkappa(\tilde x)||^2_\infty \le C^2\log n,
\end{align}
where $M_{D_\varkappa \tilde x}$ stands for the $d\times d$ complex 
circulant matrix with the first row equal to $D_\varkappa \tilde x$.

This leads finally also to 
\begin{equation}\label{eq:3.5}
||\mu||_2\le \sqrt{||\mu||_1\cdot ||\mu||_\infty}\le C\sqrt{k\log n}.
\end{equation}
Then
$$
\P_a\bigl(||Ya||_2^2>2(1+\varepsilon)k\bigr)=
\P_b\biggl(\sum_{j=0}^{k-1}\mu_j(|b_j|^2-2)>2\varepsilon k\biggr).
$$
We denote
$$
Z:=\sum_{j=0}^{k-1}\mu_j(|b_j|^2-2).
$$
The complex version of Lemma 1 from Section 4.1 of \cite{LM} (cf. also Lemma 2.2 of \cite{M})
states that
\begin{equation}\label{eq:3.8}
\P_b(Z\ge 2\sqrt 2||\mu||_2\sqrt t+2||\mu||_\infty t)\le\exp(-t).
\end{equation}
Using \eqref{eq:3.4} and \eqref{eq:3.5}, we arrive at
$$
\P_b(Z\ge 2\sqrt 2 C \sqrt{tk\log n}+2C^2 t\log n)\le\exp(-t).
$$
Choosing $t= \frac{c'k\varepsilon^2}{C^2 \log n}$ for $c'>0$ small enough, we get
$$
\P_b(Z\ge 2\varepsilon k)\le \exp\Bigl(-\frac{ck\varepsilon^2}{\log n}\Bigr).
$$
This finishes the proof of \eqref{eq:3.2}. Let us note, that \eqref{eq:3.3} follows in the same manner with
\eqref{eq:3.8} replaced by
\begin{equation*}
\P_b(Z\le -2\sqrt 2||\mu||_2\sqrt t)\le\exp(-t),
\end{equation*}
which may be again found in Lemma 1, Section 4.1 of \cite{LM}.

\begin{rem}
The statement and the proof of Theorem \ref{thm1} do not change, if we replace
the partial circulant matrix $M_{a,k}$ with any $k\times d$ submatrix of $M_a$.
\end{rem}

{\bf Acknowledgement:}
The author would like to thank Aicke Hinrichs for valuable comments.
The author also acknowledges the financial support provided by the 
FWF project Y 432-N15 START-Preis “Sparse Approximation and 
Optimization in High Dimensions”.

\thebibliography{99}
\bibitem{A} D.~Achlioptas, Database-friendly random projections: Johnson-Lindenstrauss with binary coins.
\emph{J. Comput. Syst. Sci.}, 66(4):671-687, 2003.

\bibitem{AC2} N.~Ailon and B.~Chazelle, Approximate nearest neighbors and the fast Johnson-Lindenstrauss transform.
In \emph{Proc. 38th Annual ACM Symposium on Theory of Computing}, 2006.

\bibitem{AC} N.~Ailon and B.~Chazelle, The fast Johnson-Lindenstrauss transform
and approximate nearest neighbors. \emph{SIAM J. Comput.} 39 (1), 302-322, 2009.

\bibitem{DG} S.~Dasgupta and A.~Gupta, An elementary proof of a theorem of Johnson and
Lindenstrauss. \emph{Random. Struct. Algorithms}, 22:60-65, 2003.

\bibitem{HV} A. Hinrichs and J. Vyb\'iral, Johnson-Lindenstrauss lemma for circulant matrices,
submitted, available on {\tt http://arxiv.org/abs/1001.4919}.

\bibitem{IM} P.~Indyk and R.~Motwani, Approximate nearest neighbors: Towards removing the curse of
dimensionality. In \emph{Proc. 30th Annual ACM Symposium on Theory of Computing}, pp. 604-613, 1998.

\bibitem{JL} W.~B.~Johnson and J.~Lindenstrauss, Extensions of Lipschitz mappings into a Hilbert space.
\emph{Contem. Math.}, 26:189-206, 1984.

\bibitem{LM} B.~Laurent and P. Massart, Adaptive estimation of a quadratic functional 
by model selection.  \emph{Ann. Statist.}  28(5):1302--1338, 2000.

\bibitem{M} J.~Matou\v{s}ek, On variants of the Johnson-Lindenstrauss lemma,
\emph{Random Struct. Algorithms}  33(2):142--156, 2008.

\end{document}